\newtheorem{thm}{Theorem}[section]
\newtheorem{corollary}[thm]{Corollary}
\newtheorem{proposition}[thm]{Proposition}
\theoremstyle{definition}
\newtheorem{example}[thm]{Example}
\theoremstyle{remark}
\DeclareMathOperator{\htt}{ht}
\DeclareMathOperator{\qf}{qf}
\DeclareMathOperator{\td}{t.d.}
\newcommand{\field}[1]{\mathbb{#1}}
\newcommand{\N}{\field{N}}
\def\1{{\rm (1)}}
\def\2{{\rm (2)}}
\def\3{{\rm (3)}}
\def\4{{\rm (4)}}
\def\5{{\rm (5)}}
  \newcounter{xenumi}
\makeatletter\@removefromreset{equation}{section}\makeatother
\begin{document}

\title[Bouvier's conjecture]{Bouvier's conjecture}

\author{S. Bouchiba}

\address{Department of Mathematics,
Faculty of Sciences, University of Meknes, Meknes 50000, Morocco}

\email{bouchiba@fsmek.ac.ma}

\author{S. Kabbaj}

\address{Department of Mathematical Sciences,
King Fahd University of Petroleum \& Minerals, P. O. Box 5046,
Dhahran 31261, Saudi Arabia}

\email{kabbaj@kfupm.edu.sa}

\thanks{This work was funded by King Fahd University of Petroleum \& Minerals under \# IP-2007/39.}

\date{}

\subjclass[2000]{Primary 13C15, 13F05, 13F15; Secondary 13E05,
13F20, 13G05, 13B25, 13B30}

\keywords{Noetherian domain, Krull domain, factorial domain, affine domain, Krull dimension, valuative dimension, Jaffard domain,
fourteenth problem of Hilbert}

\begin{abstract}
This paper deals with Bouvier's conjecture which sustains that finite-dimensional non-Noetherian Krull domains need not be Jaffard.
\end{abstract}

\maketitle

\section{Introduction}\label{Int}

All rings and algebras considered in this paper are commutative with identity element and, unless otherwise specified, are assumed to be non-zero. All ring homomorphisms are unital. If $k$ is a field and $A$ a domain which is a $k$-algebra, we use $\qf(A)$ to denote the quotient field of $A$ and $\td(A)$ to denote the transcendence degree of $\qf(A)$ over $k$. Finally, recall that an affine domain over a ring $A$ is a finitely generated $A$-algebra that is a domain \cite[p. 127]{Na2}. Any unreferenced material is standard as in \cite{G1,Ka,Ma}.

A finite-dimensional integral domain $R$ is said to be Jaffard if $$\dim(R[X_{1}, ..., X_{n}])= n + \dim(R)$$ for all $n \geq 1$; equivalently, if $\dim(R) = \dim_{v}(R)$, where $\dim(R)$ denotes the (Krull) dimension of $R$ and $\dim_{v}(R)$ its valuative dimension (i.e., the supremum of dimensions of the valuation overrings of $R$). As this notion does not carry over to localizations, $R$ is said to be locally Jaffard if $R_p$ is a
Jaffard domain for each prime ideal $p$ of $R$ (equiv., $S^{-1}R$ is a Jaffard domain for each multiplicative subset $S$ of $R$). The class of Jaffard domains contains most of the well-known classes of rings involved in Krull dimension theory such as Noetherian domains, Pr\"ufer domains, universally catenarian domains, and universally strong S-domains. We assume familiarity with these concepts, as in \cite{ABDFK,ADKM,BDF,BK,DFK,J,K1,K2,MM}.

It is an open problem to compute the dimension of polynomial rings over Krull domains in general. In this vein, Bouvier conjectured that
``finite-dimensional Krull (or more particularly factorial) domains need not be Jaffard'' \cite{BK,FK}. In Figure~\ref{D}, a diagram of implications
places this conjecture in its proper perspective and hence shows how it naturally arises. In particular, it indicates how the classes of
(finite-dimensional) Noetherian domains, Pr\"ufer domains, UFDs, Krull domains, and PVMDs \cite{G1} interact with the notion of Jaffard
domain as well as with the (strong) S-domain properties of Kaplansky \cite{K2,Ka,MM}.

\begin{figure}
\[\setlength{\unitlength}{.9mm}
\begin{picture}(100,80)(0,0)
\put(40,80){\vector(0,-2){20}} \put(40,80){\vector(-2,-1){40}}
\put(40,80){\vector(2,-1){40}} \put(0,60){\vector(0,-2){20}}
\put(40,60){\vector(-2,-1){40}} \put(40,60){\vector(0,-2){20}}
\put(40,60){\vector(2,-1){40}} \put(80,60){\vector(0,-2){20}}
\put(80,20){\vector(0,-2){20}} \put(40,40){\vector(-2,-1){40}}
\put(40,40){\vector(2,-1){40}} \put(40,40){\vector(0,-2){20}}
\put(0,20){\vector(0,-2){20}} \put(80,40){\vector(0,-2){20}}
\put(80,40){\vector(-2,-1){40}} \put(0,40){\vector(0,-2){20}}
\put(40,20){\vector(-2,-1){40}} \put(40,20){\vector(2,-1){40}}

\put(-8,40){\oval(18,7)} \put(-8,60){\oval(18,7)}
\put(62,26){\oval(36,7)} \put(99,20){\oval(35,7)}
\put(90,10){\oval(18,7)}

\put(40,80){\circle*{.7}} \put(40,82){\makebox(0,0)[b]{PID}}
\put(0,60){\circle*{.7}} \put(-2,60){\makebox(0,0)[r]{UFD}}
\put(0,40){\circle*{.7}} \put(-2,40){\makebox(0,0)[r]{Krull}}
\put(0,20){\circle*{.7}} \put(-2,20){\makebox(0,0)[r]{PVMD}}
\put(40,60){\circle*{.7}} \put(43,60){\makebox(0,0)[l]{Dedekind}}
\put(40,40){\circle*{.7}} \put(43,40){\makebox(0,0)[l]{Pr\"ufer}}
\put(40,20){\circle*{.7}} \put(40,16){\makebox(0,0)[t]{Strong
S-domain}}
\put(80,20){\circle*{.7}}\put(84,20){\makebox(0,0)[l]{Locally
Jaffard}}
\put(80,10){\circle*{.7}}\put(84,10){\makebox(0,0)[l]{Jaffard}}
\put(60,30){\circle*{.7}}\put(45,26){\makebox(0,0)[l]{\small U.
Strong S-Domain}}
\put(80,0){\circle*{.7}}\put(80,-2){\makebox(0,0)[t]{$\dim R[X]
=\dim R + 1$}}
\put(0,0){\circle*{.7}}\put(0,-2){\makebox(0,0)[t]{S-domain}}
\put(80,40){\circle*{.7}} \put(82,40){\makebox(0,0)[l]{Noetherian}}
\put(80,60){\circle*{.7}} \put(82,60){\makebox(0,0)[l]{UFD +
Noetherian}}
\end{picture}\]\medskip

\caption{\tt Diagram of Implications} \label{D}
\end{figure}
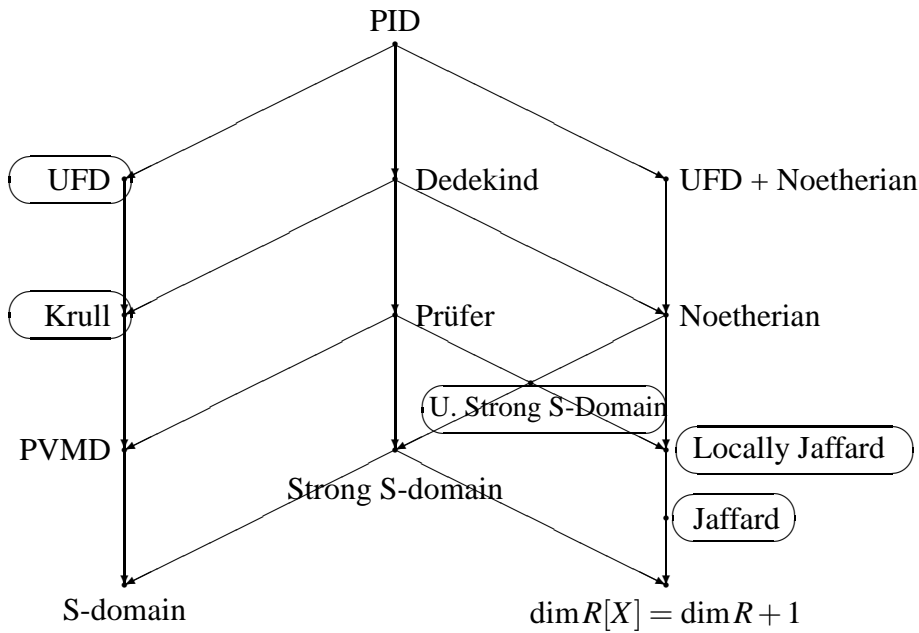

This paper scans all known families of examples of non-Noetherian finite dimensional  Krull (or factorial) domains existing in the literature. In Section 2, we show that most of these examples are in fact locally Jaffard domains. One of these families which arises from David's second example \cite{Da2} yields examples of Jaffard domains but it is still open whether these are locally Jaffard. Further, David's example turns out to be the first example of a $3$-dimensional factorial domain which is not catenarian (i.e., prior to Fujita's example \cite{Fu}). Section 3 is devoted to the last known family of examples which stem from the generalized fourteenth problem of Hilbert (also called Zariski-Hilbert problem): Let $k$ be a field of characteristic zero, $T$ a normal affine domain over $k$, and $F$ a subfield of $\qf(T)$. The Hilbert-Zariski problem asks whether $R:=F\cap T$ is an affine domain over $k$. Counterexamples on this problem were constructed by Rees \cite{Re}, Nagata \cite{Na1} and Roberts \cite{R1,R2} where $R$ wasn't even Noetherian. In this vein, Anderson, Dobbs, Eakin, and Heinzer \cite{ADEH} asked whether $R$ and its localizations inherit from $T$ the Noetherian-like main behavior of having Krull and valuative dimensions coincide (i.e., Jaffard). This problem will be addressed within the more general context of subalgebras of affine domains over Noetherian domains; namely, let $A\subseteq R$ be an extension of domains where $A$ is Noetherian and $R$ is a subalgebra of an affine domain over $A$. It turns out that $R$ is Jaffard but it is still elusively open whether $R$ is locally Jaffard.

\section{Examples of non-Noetherian Krull domains}\label{sec:2}

Obviously, Bouvier's conjecture (mentioned above) makes sense beyond the Noetherian context. As the notion of Krull domain is stable under formation of
rings of fractions and adjunction of indeterminates, it merely claims ``the existence of a Krull domain $R$ and a multiplicative subset $S$ (possibly
equal to $\{1\}$) such that \(1+{\tt dim}(S^{-1}R)\lneqq {\tt dim}(S^{-1}R[X])."\) However, finite-dimensional non-Noetherian Krull domains are
scarce in the literature and one needs to test them and their localizations as well for the Jaffard property.

Next, we show that most of these families of examples are subject to the (locally) Jaffard property. This reflects the difficulty of proving or disproving Bouvier's conjecture.

\begin{example}\label{sec:2.1}
 Nagarajan's example \cite{Nj} arises as the ring $R_0$ of invariants of a finite group
 of automorphisms acting on $R:=k[[X,Y]]$, where $k$ is a field of
 characteristic $p\not=0$. It turned out that $R$ is integral
 over $R_0$. Therefore \cite[Theorem 4.6]{MM} forces $R_{0}$ to be a universally strong
 S-domain, hence a locally Jaffard domain \cite{ABDFK,Ka}.
\end{example}

\begin{example}\label{sec:2.2}
Nagata's example \cite[p. 206]{Na2} and David's example \cite{Da1}
arise as integral closures of Noetherian domains, which are
necessarily universally strong S-domains by \cite[Corollary
4.21]{MM}
 (hence locally Jaffard).
\end{example}

\begin{example}\label{sec:2.3}
Gilmer's example \cite{G2} and Brewer-Costa-Lady's example \cite{BCL} arise as group rings (over a field and a group of finite rank), which are
universally strong S-domains by \cite{ACKZ} (hence locally Jaffard).
\end{example}

\begin{example}\label{sec:2.4}
Fujita's example \cite{Fu} is a
3-dimensional factorial quasilocal domain $(R,M)$ that arises as a
directed union of 3-dimensional Noetherian domains, say $R=\bigcup
R_{n}$. We claim $R$ to be a locally Jaffard domain.

Indeed, the localization with respect to any height-one prime ideal is a DVR
(i.e., discrete valuation ring) and hence a Jaffard domain. As, by
\cite[Theorem 2.3]{DFK}, $R$ is a Jaffard domain, then $R_M$ is
locally Jaffard. Now, let $P$ be a prime ideal of $R$ with
$\htt(P)=2$. Clearly, there exists $Q\in$ Spec$(R)$ such that
$(0)\subset Q\subset P\subset M$ is a saturated chain of prime
ideals of $R$. As, $\htt(M[n])=\htt(M)=3$ for each positive integer $n$,
we obtain $\htt(P[n])=\htt(P)=2$ for each positive integer $n$. Then
$R_P$ is locally Jaffard, as claimed.
\end{example}

\begin{example}\label{sec:2.5}
David's second example \cite{Da2} is a $3$-dimensional factorial domain
$J:=\bigcup J_n$ which arises as an
ascending union of $3$-dimensional polynomial rings $J_n$ in three
indeterminates over a field $k$. We claim that $J$ is a Jaffard
domain. Moreover, $J$ turns out to be non catenarian. Thus, David's example is the first example of a
$3$-dimensional factorial domain which is not catenarian (prior to Fujita's example).

Indeed, we have $J_n:=k[X,\beta_{n-1},\beta_n]$ for each positive integer $n$, where the indeterminates $\beta_n$ satisfy the following condition: For $n\geq 2$,
\begin{equation}\label{eq2.5.1}\beta_n=\displaystyle {\frac {-\beta_{n-1}^{s(n)}+\beta_{n-2}}X}\end{equation}
where the $s(n)$ are positive integers. Also, $J_n\subseteq
J\subseteq J_n[X^{-1}]$ for each positive integer $n$. By \cite[Theorem 2.3]{DFK}, $J$ is a Jaffard domain, as the $J_n$ are affine domains. Notice, at this point, we weren't able to prove or disprove that $J$ is locally Jaffard.

Next, fix a positive integer $n$. We have ${\dfrac {J_n}{XJ\cap
J_n}}=k[\overline {\beta_{n-1}},\overline {\beta_n}]$. On account of (\ref{eq2.5.1}), we get
\begin{equation}\label{eq2.5.2}\overline {\beta_{n-1}}=\overline{\beta_n}^{\ s(n+1)}.\end{equation}
Therefore
\begin{equation*}\displaystyle {\frac {J_n}{XJ\cap J_n}}=k[\overline{\beta_n}].\end{equation*}
Iterating the formula in (\ref{eq2.5.2}), it is clear that for
each positive integers $n\leq m$, there exists a positive integer $r$
such that $\overline {\beta_n}=\overline {\beta_m}^{\ r}$ with respect
to the integral domain $\displaystyle {\frac J{XJ}}$. It follows
that $\displaystyle {\frac J{XJ}}$ is integral over $k[\overline
{\beta_n}]$ for each positive integer $n$. Surely, $\overline
{\beta_n}$ is transcendental over $k$, for each positive integer $n$,
since $(0)\subset XJ\subset M:=(X,\beta_0,\beta_1,...,\beta_n,...)$
is a chain of distinct prime ideals of $J$. Then dim$(\displaystyle
{\frac J{XJ}})=1$ and thus $(0)\subset XJ\subset
M:=(X,\beta_0,\beta_1,...,\beta_n,...)$ is a saturated chain of
prime ideals of $J$. As $\htt(M)=3$, it follows that $J$ is not
catenarian, as desired.
\end{example}

\begin{example}\label{sec:2.6}
Anderson-Mulay's example \cite{AM}
draws from a combination of
 techniques of Abhyankar \cite{Ab} and Nagata \cite{Na2} and arises as a
 directed union of polynomial rings
 over a field. Let $k$ be
 a field, $d$ an integer $\geq1$, and $X,Z,Y_{1}, ..., Y_{d}$  $d+2$
 indeterminates over $k$. Let $\{\beta_{i}:=\sum_{n\geq 0} b_{in}X^{n}\ |\ 1\leq i\leq d\}\subset
 k[[X]]$ be a set of algebraically independent elements over $k(X)$ (with $b_{in}\not=0$ for all $i$ and $n$).
 Define $\{U_{in}\ |\ 1\leq i\leq d,\ 0\leq
 n\}$ by
 \begin{eqnarray*}
  U_{i0}&:= &Y_{i}\\
  U_{in}&:= &\frac{Y_{i}+Z(\sum_{0\leq k\leq n-1}
 b_{ik}X^{k})}{X^{n}},\ \mbox{for}\ n\geq 1.
\end{eqnarray*}
 For any $i, n$ we have
\begin{equation}\label{eq2.6.1}
  U_{in}=XU_{i(n+1)}-b_{in}Z.
\end{equation}
Let $R_{n}:= k[X,Z,U_{1n}, ..., U_{dn}]$, a polynomial ring in $d+2$
indeterminates (by~(\ref{eq2.6.1})); and
 let $R:=\bigcup R_{n}=k[X,Z,\{U_{1n}, ..., U_{dn}\ |\ n\geq0\}]$. They
 proved that $R$ is a $(d+2)$-dimensional non-Noetherian Jaffard and factorial domain.
We claim that $R$ is locally Jaffard. For this purpose, we envisage two cases.

{\bf Case 1: {\boldmath $k$} is
algebraically closed}. Let $P$ be a prime
  ideal of $R$. We may suppose $\htt(P)\geq2$ (since $R$ is factorial).
  Assume $X\notin P$. Clearly, $R_{0}\subset R\subset R_{0}[X^{-1}]$,
  then $R_{P}\cong (R[X^{-1}])_{PR[X^{-1}]}=(R_{0}[X^{-1}])_{PR_{0}[X^{-1}]}$
  is Noetherian (hence Jaffard). Assume $X\in P$. By~(\ref{eq2.6.1}), $\displaystyle {\frac{R}{XR}}\cong
  k[Z]$. Then $P=(X,f)$ for some irreducible polynomial $f$ in
  $k[Z]$. As $k$ is algebraically closed, we get $f=Z-\alpha$ for
  some $\alpha\in k$. For any positive integer $n$ and $i=1,...,d$, define
\begin{equation*}
V_{in}:=U_{in}+b_{in}\alpha.
\end{equation*}
Observe that, for each $n$ and $i$, we have
\begin{eqnarray*}
  R_{n}&=&k[X,Z-\alpha, V_{1n},...,V_{dn}]\\
  V_{in}&=&XU_{i(n+1)}-b_{in}(Z-\alpha).
 \end{eqnarray*}
Then $P\cap
  R_n=(X,Z-\alpha,\{V_{1n},...,V_{dn}\})$ is a maximal ideal of $R_n$ for
  each positive integer $n$. For each $0\leq i\leq d$, set
\begin{equation*}
  P_i:=(Z-\alpha,\{V_{rn}\}_{1\leq r\leq i,\ 0\leq n})R.
\end{equation*}
Each $P_i$ is
  a prime ideal of $R$ since $P_i\cap R_n=(Z-\alpha, V_{1n}, \ldots, V_{in} )$ is a prime ideal of $R_n$. This gives rise to the following
  chain of prime ideals of $R$
 \begin{equation*}0\subset (Z-\alpha)R=P_0\subset P_1\subset ...\subset P_d\subset P.\end{equation*}
  Each inclusion is proper since
  the $P_i$'s contract to distinct ideals in each $R_n$. Hence
  $\htt(P)\geq d+2$, whence $\htt(P)=d+2$ as $\dim(R)=d+2$. Since $R$
  is a Jaffard domain, we get $\htt(P[n])=\htt(P)$ for each positive
  integer $n$. Therefore, $R$ is locally Jaffard, as desired.

{\bf Case 2: {\boldmath $k$} is an arbitrary field}. Let $K$ be an
algebraic closure of $k$. Let $T_n=K[X,Z,U_{1n}, ..., U_{dn}]$ for
each positive integer $n$ and let $$T:=\bigcup_{n\geq 0}
T_{n}=K[X,Z,\{U_{1n}, ..., U_{dn}:n\geq0\}].$$ Let $Q$ be a minimal
prime ideal of $PT$. Then $Q=(X,Z-\beta)$ with $\beta\in K$, as
$\displaystyle {\frac T{XT}\cong K[Z]}$. By the above case, we have
$\htt(Q)=d+2$. Hence $\htt(PT)=d+2$. As $T_n\cong K\otimes_kR_n$, we get,
$$T=\displaystyle {\bigcup_{n\geq 0}T_n=\bigcup_{n\geq
0}K\otimes_kR_n=K\otimes_k\bigcup_{n\geq 0}R_n=K\otimes_kR}.$$
Then $T$ is a free and hence faithfully flat $R$-module. A well-known property of faithful flatness shows that $PT\cap R=P$. Further,
$T$ is an integral and flat extension of $R$. It follows that $\htt(PT)=\htt(P)=d+2$, and thus $R_P$ is
a Jaffard domain.
\end{example}

\begin{example}\label{sec:2.7}
Eakin-Heinzer's 3-dimensional non-Noetherian Krull domain, say $R$,
arises -via \cite{Re} and \cite[Theorem 2.2]{EH}- as the symbolic
Rees algebra with respect to a minimal prime ideal $P$ of the
2-dimensional homogeneous coordinate ring $A$ of a nonsingular
elliptic cubic defined over the complex numbers. We claim that this
construction, too, yields locally Jaffard domains. Indeed, let
$K:=\qf(A)$, $t$ be an indeterminate over $A$, and $P^{(n)}:=
P^{n}A_{P}\cap A$, the $n$th symbolic power of $P$, for $n\geq2$.
Set $R:=A[t^{-1},Pt,P^{(2)}t^{2},...,P^{(n)}t^{n},...]$, the
3-dimensional symbolic Rees algebra with respect to $P$. We have
$$A\subset A[t^{-1}]\subset R\subset A[t,t^{-1}]\subset K(t^{-1}).$$
Let $Q$ be a prime ideal of $R$, $Q':=Q\cap A[t^{-1}]$, and $q:=Q\cap A=Q'\cap A$. We envisage three cases.

{\bf Case 1:} {\boldmath$\htt(Q)=1$}. Then $R_{Q}$ is a DVR hence a Jaffard domain.

{\bf Case 2:} {\boldmath$\htt(Q)=3$}. Then $3=\dim(R_{Q})\leq \dim_{v}(R_{Q})\leq \dim_{v}(A[t^{-1}]_{Q'})=\dim(A[t^{-1}]_{Q'})\leq \dim(A[t^{-1}]=1+\dim(A)=3$. Hence $R_{Q}$
is a Jaffard domain.

{\bf Case 3:}  {\boldmath$\htt(Q)=2$}. If $t^{-1}\notin Q$, then $R_{Q}$ is a localization of $A[t,t^{-1}]$, hence a Jaffard domain. Next, assume that
$t^{-1}\in Q$. If $Q$ is a homogeneous prime ideal, then
$Q\subset M:=(m[t^{-1}]+t^{-1}A[t^{-1}])\oplus pt\oplus...\oplus
p^{(n)}t^n\oplus ...$ and $\htt(M)=3$, where $m$ is the unique maximal
ideal of $A$. As $R$ is a Jaffard domain, we get
$\htt(M[X_1,...,X_n])=\htt(M)=3$ for each positive integer $n$. Hence
$\htt(Q[X_1,...,X_n])=\htt(Q)=2$ for each positive integer $n$, so that
$R_Q$ is Jaffard. Now, assume that $Q$ is not homogeneous. As
$t^{-1}\in Q$ and $\htt(Q)=1+\htt(Q^*)$, where $Q^*$ is the ideal
generated by all homogeneous elements of $Q$, we get $Q^*=t^{-1}R$
which is a height one prime ideal of the Krull domain $R$. Also, for
each positive integer $n$, note that
$Q[X_1,X_2,...,X_n]^*=Q^*[X_1,...,X_n]$. Therefore, for each positive integer $n$, we have
\begin{eqnarray*}
\htt(Q[X_1,...,X_n])&=&1+\htt(Q[X_1,...,X_n]^*)\\
&=&1+\htt(Q^*[X_1,...,X_n])\\
&=&1+\htt(t^{-1}R[X_1,...X_n])\\
&=&1+\htt(t^{-1}R)=2\\
&=&\htt(Q).
\end{eqnarray*}
It follows that $R_Q$ is Jaffard, completing the proof. Notice that
Anderson-Dobbs-Eakin-Heinzer's example \cite[Example 5.1]{ADEH} is a
localization of $R$ (by a height 3 maximal ideal), then locally
Jaffard.
\end{example}

Also, Eakin-Heinzer's second example \cite{EH} is a universally strong S-domain; in fact, it belongs to the same family as Example~\ref{sec:2.1}.
Another family of non-Noetherian finite-dimensional Krull domains stems from the generalized fourteenth problem of Hilbert (also called
Zariski-Hilbert problem). This is the object of our investigation in the following section.

\section{Krull domains issued from the Hilbert-Zariski problem}\label{sec:3}

Let $k$ be a field of characteristic zero and let $T$ be a normal affine domain over $k$. Let $F$ be a subfield of the field of fractions of $T$. Set
$R:=F\cap T$. The Hilbert-Zariski problem asks whether $R$ is an affine domain over $k$. Counterexamples on this problem were constructed by Rees
\cite{Re}, Nagata \cite{Na1} and Roberts \cite{R1,R2}, where it is shown that $R$ does not inherit the Noetherian property from $T$ in general. In this vein,
Anderson, Dobbs, Eakin, and Heinzer \cite{ADEH} asked whether $R$ inherits from $T$ the Noetherian-like main behavior of being locally Jaffard. We
investigate this problem within a more general context; namely, extensions of domains $A\subseteq R$, where $A$ is Noetherian and $R$ is a subalgebra
of an affine domain over $A$.

The next result characterizes the subalgebras of affine domains over a Noetherian domain. It allows one to reduce the study of the prime ideal structure of these constructions to those domains $R$ between a Noetherian domain $B$ and its localization $B[b^{-1}]$ $(0\not=b\in B)$.

\begin{proposition}\label{sec:3.1}
Let $A\subseteq R$ be an extension of domains where $A$ is Noetherian. Then the following statements are equivalent:
\begin{enumerate}
\item  $R$ is a subalgebra of an affine domain over $A$;
\item  There is $r\neq 0\in R$ such that $R[r^{-1}]$ is an affine domain over $A$;
\item  There is an affine domain $B$ over $A$ and $b\neq 0\in B$ such that $B\subseteq R\subseteq B[b^{-1}]$.
\end{enumerate}
\end{proposition}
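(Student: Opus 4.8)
The plan is to prove the cycle of implications $(3) \Rightarrow (1) \Rightarrow (2) \Rightarrow (3)$, of which the first is trivial (an affine domain over $A$ is itself a subalgebra of an affine domain over $A$, namely of itself) and the third is nearly so: given $r \neq 0$ in $R$ with $R[r^{-1}]$ affine over $A$, write $R[r^{-1}] = A[t_1,\dots,t_m]$ with $t_i \in R[r^{-1}]$, clear denominators so that each $t_i = s_i/r^{k}$ with $s_i \in R$ for a common exponent $k$, set $B := A[r, s_1,\dots,s_m] \subseteq R$ (an affine domain over $A$), and observe that $R \subseteq R[r^{-1}] = A[s_1/r^k,\dots,s_m/r^k] \subseteq B[r^{-1}]$, so $(3)$ holds with $b := r$. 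The substance of the proposition is therefore the implication $(1) \Rightarrow (2)$.

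For $(1) \Rightarrow (2)$, suppose $R$ is a subalgebra of an affine domain $C = A[x_1,\dots,x_n]$ over $A$, with $R \subseteq C$ and $R$ and $C$ sharing the same quotient field is not assumed, so we work inside $C$. The idea is that $C$, being a finitely generated algebra over the Noetherian ring $A$, is itself Noetherian, and $R$ sits between $A$ and $C$; we want to capture $C$ as a localization of a finitely generated $R$-subalgebra. The key step is: each generator $x_j$ of $C$ is integral-free but lies in the fraction field of $R$ only after inverting something — more precisely, the approach is to pass to a common construction. I would first reduce to the case $A \subseteq R \subseteq C$ with $C$ affine over $A$, and then consider the finitely generated $A$-subalgebra $C$ which is Noetherian; since $R \subseteq C$, the conductor-type argument or a direct denominator-clearing shows there is a nonzero $r \in R$ such that $x_j \in R[r^{-1}]$ for all $j$: indeed write each $x_j$, a priori just an element of $C \supseteq R$, and here one needs $x_j$ to be expressible via elements of $R$ after localization. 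The cleanest route: $\qf(R)$ need not equal $\qf(C)$, so instead argue that $C$ is a finitely generated $R$-algebra (being finitely generated over the smaller ring $A \subseteq R$), say $C = R[x_1,\dots,x_n]$, and now invoke a lemma of Artin–Tate type or the following observation — since $C$ is Noetherian and module-finite considerations fail, one instead uses that $C$ is a finitely generated $R$-algebra which is also Noetherian, hence one seeks $r$ with $R[r^{-1}] = C[r^{-1}]$.

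The honest crux, then, is establishing that for a finitely generated $R$-algebra $C$ (automatically Noetherian since finitely generated over Noetherian $A \subseteq R$), with $R \subseteq C$, there exists $0 \neq r \in R$ with $C[r^{-1}] = R[r^{-1}]$; from this $(2)$ follows because $R[r^{-1}] = C[r^{-1}]$ is an affine domain over $A$ (being a localization of the affine domain $C$ at one element). To prove this, write $C = R[x_1,\dots,x_n]$ and induct on $n$; for $n = 1$, $C = R[x]$ with $x \in C$, and since $C$ is Noetherian, the $R$-submodule $\sum_{i\geq 0} R x^i$ is finitely generated, forcing a relation $x^N = \sum_{i<N} a_i x^i$ with $a_i \in R$ — wait, that would make $x$ integral over $R$, which need not hold. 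The correct mechanism is the standard fact (e.g. from the proof that localization commutes with finite generation, cf. \cite{Ma}) that if $C = R[x_1,\dots,x_n]$ and $C$ is Noetherian, one picks a finite $R$-algebra presentation of the Noetherian ring $C$; alternatively, and most transparently, realize $R$ itself as a directed union of finitely generated $A$-subalgebras $R_\lambda$, note $C = R[x_1,\dots,x_n]$ means $C = \bigcup_\lambda R_\lambda[x_1,\dots,x_n]$, each $R_\lambda[x_1,\dots,x_n]$ is an affine domain over $A$ hence Noetherian, and since $C$ is Noetherian the ascending chain of ideals (or of subrings, via a Noetherian-ness argument on $C$) stabilizes so that $C = R_{\lambda_0}[x_1,\dots,x_n] =: B$ for some $\lambda_0$; then $B$ is an affine domain over $A$, $B \subseteq R$ is false in general — rather $B \supseteq$ the generators but we need $B \subseteq R$. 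At this point the \textbf{main obstacle} is exactly this containment bookkeeping: arranging the finitely generated intermediate affine domain $B$ to satisfy $B \subseteq R \subseteq B[b^{-1}]$ simultaneously, which I expect requires choosing $B$ to be generated by the finitely many generators of $C$-as-$A$-algebra \emph{together with} enough elements of $R$ to express the $C$-generators' "coordinates", and then showing the overring $R$ is trapped below $B[b^{-1}]$ by a finiteness/Noetherian argument applied to $C$. I would handle this by proving $(1)\Rightarrow(3)$ directly: take $C = A[x_1,\dots,x_n] \supseteq R$ affine over $A$; since $C$ is a Noetherian $R$-module-algebra... the polynomial ring $R[X_1,\dots,X_n]$ surjects onto $C$, its kernel $I$ is finitely generated over $R[X_1,\dots,X_n]$ only if $R$ is Noetherian, which it is not — so instead note $I \cap A[X_1,\dots,X_n]$-generated ideal already cuts out something whose localization recovers $C$, and one extracts the finitely many coefficients (elements of $R$) appearing in a finite generating set of $I$ as an ideal of $C$'s presentation, adjoins them and their "inverses-enablers" to $A$ to form $B$, and verifies $B \subseteq R \subseteq B[b^{-1}]$. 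This denominator-and-coefficient extraction is the technical heart and where I would spend the most care.
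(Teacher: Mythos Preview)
Your treatment of $(3)\Rightarrow(1)$ and $(2)\Rightarrow(3)$ is correct and matches the paper's proof essentially verbatim (the paper writes the generators of $R[r^{-1}]$ as $\sum r_{ij}r^{-j}$ and takes $B=A[\{r_{ij}\}]$, $b=r$, which is exactly your denominator-clearing). You are also right that $(1)\Rightarrow(2)$ is the substantive implication: the paper does not prove it but cites Giral \cite[Proposition 2.1(b)]{Gi}.

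Your attempts at $(1)\Rightarrow(2)$, however, contain a genuine gap. The central approach you settle on --- finding $0\neq r\in R$ with $C[r^{-1}]=R[r^{-1}]$ --- is doomed in general, for the reason you yourself flag and then set aside: one may have $\qf(R)\subsetneq\qf(C)$, and in that case $C[r^{-1}]$ has quotient field $\qf(C)$ for every nonzero $r$, so it can never coincide with $R[r^{-1}]$. (Concretely, take $A=R=k$ a field and $C=k[x]$: no localization of $R$ equals any localization of $C$.) Your fallback ideas do not escape this: the directed-union argument tries to write $C$ as $R_{\lambda_0}[x_1,\dots,x_n]$, but an ascending union of subrings of the Noetherian ring $C$ need not stabilize (Noetherianness controls ideals, not subrings); and the presentation-coefficient extraction for $(1)\Rightarrow(3)$ runs into exactly the obstacle you name, that $\ker\big(R[X_1,\dots,X_n]\twoheadrightarrow C\big)$ need not be finitely generated when $R$ is not Noetherian.

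What is missing is a mechanism that handles the case $\qf(R)\subsetneq\qf(C)$. Giral's argument (and the closely related one in Onoda--Yoshida \cite{OY}) proceeds by working over $K:=\qf(R)$: the finitely generated $K$-algebra $K\cdot C\subseteq\qf(C)$ admits a Noether normalization, and one separates a transcendence basis of $\qf(C)$ over $K$ from the part algebraic over $K$; after clearing finitely many denominators (this is where the single $r\in R$ comes from) one reduces to a situation where the relevant extension is module-finite and Artin--Tate applies. In other words, the Noetherian hypothesis on $A$ is used through the Noetherianness of $C$ together with structure theory over the \emph{field} $K$, not through any finiteness internal to $R$. I would recommend consulting \cite{Gi} directly rather than trying to improvise this step.
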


\begin{proof}
\1 $\Rightarrow$ \2 This is \cite[Proposition 2.1(b)]{Gi}.

\2 $\Rightarrow$ \3 Let $r\neq 0\in R$ and $x_1,...,x_n\in R[r^{-1}]$ such that $R[r^{-1}]=A[x_1,...,x_n]$. For each $i=1,...,n$, write
$x_i=\sum_{j=0}^{n_i}r_{ij}r^{-j}$ with $r_{ij}\in R$ and $n_i\in \N$. Let $B:=A[\{r_{ij}: i=1,...,n$ and $j=0,...,n_i\}]$ and let $b:=r$. Clearly, $B$ is an affine domain over $A$ such that $B\subseteq R\subseteq B[b^{-1}]$.

The implication \3 $\Rightarrow$ \1 is trivial, completing the proof of the proposition.
\end{proof}

\begin{corollary}\label{sec:3.2}
Let $A\subseteq R$ be an extension of domains where $A$ is Noetherian and $R$ is a subalgebra of an affine domain over $A$. Then there exists an
affine domain $T$ over $A$ such that $R\subseteq T$ and $R_p$ is Noetherian (hence Jaffard) for each prime ideal $p$ of $R$ that survives in $T$.
\end{corollary}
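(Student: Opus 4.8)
The plan is to read the required overring $T$ straight off Proposition~\ref{sec:3.1}. First I would invoke the implication \1~$\Rightarrow$~\3 to obtain an affine domain $B$ over $A$ together with a nonzero $b\in B$ such that $B\subseteq R\subseteq B[b^{-1}]$, and then simply put $T:=B[b^{-1}]$. This $T$ is an affine domain over $A$, since $T\cong B[Y]/(bY-1)$ is a finitely generated $A$-algebra and a domain; moreover $T$ is Noetherian by the Hilbert basis theorem, as $A$ is Noetherian. Obviously $R\subseteq T$, so $T$ is an admissible choice, and everything reduces to showing that $R_p$ is Noetherian whenever $p\in\Spec(R)$ survives in $T$.

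The crux is that $b$ is a unit of $T$. Hence, if $p$ survives in $T$ then $pT\neq T$, which forces $b\notin p$ (otherwise $pT$ would contain the unit $b$), so that $b^{-1}\in R_p$. All the rings involved share the quotient field $\qf(R)=\qf(B)=\qf(T)$, so inside this field we have the chain $R\subseteq T=B[b^{-1}]\subseteq R_p$. Writing $S:=R\setminus p$ and noting that the elements of $S$ are already invertible in $R_p$, I get $R_p=S^{-1}R\subseteq S^{-1}T\subseteq R_p$, whence $R_p=S^{-1}T$. Therefore $R_p$ is a localization of the Noetherian ring $T$, hence Noetherian, and a Noetherian domain is Jaffard by the remarks in Section~\ref{Int}.

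I do not expect a genuine obstacle here: the content of the corollary is entirely carried by Proposition~\ref{sec:3.1}, the remaining work being the observation that the overring $B[b^{-1}]$ it furnishes is itself affine and Noetherian and that inverting $b$ is forced in any localization of $R$ at a prime surviving in $T$. The only step needing a touch of care is the sandwich identity $R_p=S^{-1}T$, which relies on the fact that $R$, $T$ and $R_p$ all live inside the single field $\qf(R)$ so that the displayed inclusions genuinely compose.
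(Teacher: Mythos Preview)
Your proposal is correct and follows essentially the same approach as the paper: invoke Proposition~\ref{sec:3.1} to obtain $B\subseteq R\subseteq B[b^{-1}]$, set $T:=B[b^{-1}]$, and show that $R_p$ is a localization of the Noetherian ring $T$ whenever $b\notin p$. The only cosmetic difference is that the paper records the identity $R[b^{-1}]=B[b^{-1}]$ (immediate from the sandwich $B\subseteq R\subseteq B[b^{-1}]$) and writes $R_p\cong R[b^{-1}]_{pR[b^{-1}]}=T_{pT}$ directly, whereas you reach the same conclusion via the inclusions $R\subseteq T\subseteq R_p$ and $R_p=S^{-1}T$; these are equivalent formulations of the same step.
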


\begin{proof} By Proposition~\ref{sec:3.1}, there exists an affine domain $B$ over $A$ and a nonzero element $b$ of $B$ such that
$B\subseteq R\subseteq B[b^{-1}]$. Put $T=B[b^{-1}]$. Let $p$ be a prime ideal of $R$ that survives in $T$ (i.e., $b\not\in p$). Then it is easy to
see that $$R_p\cong R[b^{-1}]_{pR[b^{-1}]}=B[b^{-1}]_{pB[b^{-1}]}=T_{pT}$$ is a Noetherian domain, as desired.
\end{proof}

\begin{corollary}\label{sec:3.3}
Let $R$ be a subalgebra of an affine domain $T$ over a field $k$. Then:
\begin{enumerate}
\item $\dim(R)= \td(R)$ and $R$ is a Jaffard domain.
\item $\dim(R)= \htt(P\cap R)+\td(\frac{R}{P\cap R})$ for each prime ideal $P$ of $T$. In particular, $\dim(R)= \htt(M)$ for each maximal ideal $M$ of $R$ that survives in $T$.
\end{enumerate}
\end{corollary}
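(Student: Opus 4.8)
The plan is to obtain everything from Proposition~\ref{sec:3.1}, Corollary~\ref{sec:3.2}, and the dimension theory of affine domains over a field, proving (1) first and then feeding it into (2).

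For (1) I would invoke Proposition~\ref{sec:3.1} with $A=k$ to fix an affine domain $B$ over $k$ and a nonzero $b\in B$ with $B\subseteq R\subseteq B[b^{-1}]$. Since $\qf(B)\subseteq\qf(R)\subseteq\qf(B[b^{-1}])=\qf(B)$, this forces $\qf(R)=\qf(B)$, hence $\td(R)=\td(B)$. Now $B[b^{-1}]$ is itself an affine domain over $k$, so $\dim(B[b^{-1}])=\td(B[b^{-1}])=\td(B)=\td(R)$; moreover $B\subseteq R\subseteq B[b^{-1}]$ gives $R[b^{-1}]=B[b^{-1}]$, so $B[b^{-1}]$ is a localization of $R$ and therefore $\dim(R)\geq\dim(B[b^{-1}])=\td(R)$. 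For the opposite inequality I would note that every valuation overring $V$ of $R$ satisfies $\qf(V)=\qf(R)$ and $k\subseteq V$, whence $\dim(V)\leq\td(R)$ by the classical rank inequality for valuation rings; combined with $\dim(R)\leq\dim_{v}(R)$ \cite{ABDFK} this yields $\dim(R)\leq\dim_{v}(R)\leq\td(R)$. Hence $\dim(R)=\dim_{v}(R)=\td(R)<\infty$, i.e.\ $R$ is Jaffard.

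For (2), fix $P\in\Spec(T)$ and set $p:=P\cap R$. Since $R/p$ embeds in the affine domain $T/P$ over $k$, part (1) applies to it and gives $\dim(R/p)=\td(R/p)$. As $\dim(R)\geq\htt(p)+\dim(R/p)$ always, and $\dim(R)=\td(R)$, what remains is the inequality $\htt_{R}(p)\geq\td(R)-\td(R/p)$. I would apply Proposition~\ref{sec:3.1} once more, now to $R\subseteq T$, to get $B\subseteq R\subseteq B[b^{-1}]$ with $B$ affine over $k$; inspecting the proof of \2 $\Rightarrow$ \3 there, the elements involved may be chosen inside $R\subseteq T$, so one may assume $B\subseteq T$ and $b\in B$. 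Put $q:=p\cap B=P\cap B$. Everything then comes down to the key inequality $\htt_{R}(p)\geq\htt_{B}(q)$: granting it, $\htt_{R}(p)\geq\htt_{B}(q)=\dim(B)-\dim(B/q)=\td(B)-\td(B/q)=\td(R)-\td(B/q)\geq\td(R)-\td(R/p)$ (the last step because $B/q$ embeds in $R/p$), which is precisely what is needed, so $\dim(R)=\htt(p)+\td(R/p)$. The final assertion is then immediate: if $M=P\cap R$ is maximal in $R$, then $R/M$ is a field, so part (1) forces $\td(R/M)=\dim(R/M)=0$ and the formula becomes $\dim(R)=\htt(M)$.

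The step I expect to be the main obstacle is the key inequality $\htt_{R}(p)\geq\htt_{B}(q)$. When $b\notin p$ it is painless: then $b\notin q$ and, as in Corollary~\ref{sec:3.2}, $R_{p}\cong B[b^{-1}]_{pB[b^{-1}]}=B_{q}$, so $\htt_{R}(p)=\htt_{B}(q)$. The hard case is $b\in p$: here $p$ does not survive in $B[b^{-1}]$, $R_{p}$ need not be Noetherian, and---as the example $R=k+bT$ shows---going-down for $B\subseteq R$ may fail, so one cannot simply lift an arbitrary saturated chain of $B$ lying below $q$. The plan here is to lift a carefully chosen chain. Since $B_{q}$ is a Noetherian local domain, $b$ extends to a system of parameters (because $\dim(B_{q}/bB_{q})=\htt_{B}(q)-1$), which lets one pick a prime $q_{1}\subsetneq q$ of $B$ with $\htt_{B}(q_{1})=\htt_{B}(q)-1$ and $b\notin q_{1}$, extend it downward to a saturated chain $q=q_{0}\supsetneq q_{1}\supsetneq\cdots\supsetneq q_{m}=0$ (with $m=\htt_{B}(q)$, so $b\notin q_{i}$ for all $i\geq1$), and lift $q_{1},\dots,q_{m}$ through the identification $R[b^{-1}]=B[b^{-1}]$ to a chain $p_{1}\supsetneq\cdots\supsetneq p_{m}=0$ of primes of $R$ with $p_{i}\cap B=q_{i}$. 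Then $p=p_{0}\supsetneq p_{1}\supsetneq\cdots\supsetneq p_{m}=0$ is a chain of length $m=\htt_{B}(q)$---\emph{provided} $p_{1}\subsetneq p$, and that $p_{1}\neq p$ is clear since $p_{1}\cap B=q_{1}\neq q$. Establishing $p_{1}\subsetneq p$ is the truly delicate point, and it is here that the ambient affine domain $T$ must be brought in: one should compare $p_{1}=q_{1}B[b^{-1}]\cap R$ with $P$ by passing to the affine domain $T[b^{-1}]\supseteq B[b^{-1}]$, or else reduce modulo a minimal prime of $bR$ contained in $p$ and argue by induction on $\dim(R)$.
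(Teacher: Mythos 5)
Your proof of (1) is correct and is essentially the paper's own alternate argument (you bound $\dim_v(R)$ by the rank inequality for valuation overrings containing $k$, the paper by $\dim_v(R)\leq\dim_v(B)=\dim(B)$; same substance). Part (2), however, has a genuine gap, and it sits exactly where you suspected---but it is worse than an unverified step: the key inequality $\htt_R(p)\geq\htt_B(q)$ that your chain-lifting is designed to establish is false in general. Take $B=k[x,y]$, $b=x$, $R=B[y/x]=k[x,z]$ with $z=y/x$, and $p=xR$ (a contraction from the affine overring $T=R$, so the hypotheses of (2) hold); then $q=p\cap B=(x,y)B$, so $\htt_R(p)=1<2=\htt_B(q)$, and the lift of $q_1=(y)B$ through $R[b^{-1}]=B[b^{-1}]$ is $p_1=zR\not\subseteq p$. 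Thus the hard case $b\in p$ cannot be repaired by a cleverer choice of chain in $B$: the inequality you are aiming at simply does not hold for the $B$ produced by Proposition~\ref{sec:3.1}.

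The deeper reason the plan cannot be completed as designed is that it makes no essential use of the hypothesis that $p$ is contracted from a prime of $T$ ($T$ enters only in your closing sentence), while that hypothesis is indispensable: Example~\ref{sec:2.5} of this very paper gives $J\subseteq J_1[X^{-1}]$, a subalgebra of an affine domain, with $\dim(J)=3$, $\htt(XJ)=1$ and $\td(J/XJ)=1$, so the formula $\dim(R)=\htt(p)+\td(R/p)$ fails at $p=XJ$ (which does not survive in $J_1[X^{-1}]$); the same example defeats your inequality with $B=J_1$, where $\htt_B(XJ\cap J_1)=2$. The paper's proof of (2) uses $T$ from the start: it invokes the altitude inequality formula for the extension $R\subseteq T$ \cite[Th\'eor\`eme 1.2]{CA}, namely $\htt(P)+\td(T/P:R/p)\leq\htt(p)+\td(T:R)$, rewrites both sides via the dimension formula in the affine domain $T$ \cite[Corollary 14.6]{Na2} to obtain $\td(R)\leq\htt(p)+\td(R/p)$, combines this with the classical opposite inequality $\htt(p)+\td(R/p)\leq\td(R)$ \cite[p. 10]{ZS}, and then uses (1) to replace $\td(R)$ by $\dim(R)$; your treatment of the ``in particular'' clause coincides with the paper's. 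To salvage your approach you would need a lifting or inequality argument that genuinely exploits $P\in\Spec(T)$, which is precisely the content of the cited result of Cahen--Ayache.
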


\begin{proof}
\1 This is \cite[Proposition 5.1]{CA} which is a consequence of a more general result on valuative radicals \cite[Th\'eor\`eme 4.4]{CA}. Also the statement ``$\dim(R)= \td(R)$'' is \cite[Corollary 1.2]{OY}. We offer here an alternate proof: By Proposition~\ref{sec:3.1}, there exists an affine domain $B$ over $k$ and a nonzero element $b$ of $B$ such that $B\subseteq R\subseteq B[b^{-1}]$. By \cite[Corollary 14.6]{Na2},
$\dim(B[b^{-1}])=\dim_v(B[b^{-1}])=\dim_v(B)=\dim(B)=\td(B)=\td(R)$. Further, observe that $B[b^{-1}]=R[b^{-1}]$ is a localization of $R$. Hence
$\dim(B[b^{-1}])=\dim(R[b^{-1}])\leq \dim(R)\leq\dim_v(R)\leq\dim_v(B)$. Consequently, $\dim(R)= \dim_v(R)=\td(R)$, as desired.

\2 Let $P$ be a prime ideal of $T$ with $p:=P\cap R$. By \cite[Th\'eor\`eme 1.2]{CA}, the extension
$R\subseteq T$ satisfies the altitude inequality formula. Hence
$$ht(P)+\mbox {t.d.}(\frac TP:\frac R{p})\leq ht(p)+\mbox {t.d.}(T:R).$$
By \cite[Corollary 14.6]{Na2}, we obtain
$$\mbox {t.d.}(T:k)-\mbox {t.d.}(\frac Rp:k)\leq ht(p)+\mbox { t.d.}(T:k)-\mbox {t.d.}(R:k).$$ Then t.d.$(R)\leq
ht(p)+$t.d.$(\displaystyle {\frac Rp}:k)$. Moreover, it is well
known that $$ht(p)+\mbox {t.d.}(\frac Rp:k)\leq \mbox {t.d.}(R)
\mbox { \cite[p. 10]{ZS}.}$$ Applying (1), we get $$\mbox
{dim}(R)=\mbox {t.d.}(R:k)=ht(p)+\mbox {t.d.}(\frac R{p}:k).$$

Finally, notice that if $M\in$ Spec$(R)$ with $MT\neq T$, then there
exists $M^{\prime}\in$ Spec$(T)$ contracting to $M$, so that
$$\mbox {t.d.}(\frac RM)\leq\mbox { t.d.}(\frac T{M^{\prime}})=0\mbox { \cite[Corollary
14.6]{Na2}},$$ completing the proof.
\end{proof}

The above corollaries shed some light on the dimension and prime ideal structure of the non-Noetherian Krull domains emanating from the Hilbert-Zariski problem. In particular, these are necessarily Jaffard. But we are unable to prove or disprove if they are locally Jaffard. An in-depth study is to be carried out on (some contexts of) subalgebras of affine domains over Noetherian domains in line with Rees, Nagata, and Roberts constructions.



\begin{thebibliography}{99}
\bibitem{Ab} S. Abhyankar, Two notes on formal power series, \emph{Proc. Amer. Math. Soc.} \textbf{7} (1956), 903--905.

\bibitem{ACKZ}  S. Ameziane, D. Costa, S. Kabbaj, and S. Zarzuela, On the spectrum of the group ring, \emph{Comm. Algebra} \textbf{27} (1) (1999), 387--403.

\bibitem{ABDFK} D. F. Anderson, A. Bouvier, D. E. Dobbs, M. Fontana, and S. Kabbaj, On Jaffard domains, \emph{Exposition. Math.} \textbf{6} (2) (1988), 145--175.

\bibitem{ADEH}  D. F. Anderson, D. E. Dobbs, P. M. Eakin, and W. J. Heinzer, On the generalized principal ideal theorem and Krull domains, \emph{Pacific J. Math.} \textbf{146} (2) (1990), 201--215.

\bibitem{ADKM}  D. F. Anderson, D. E. Dobbs, S. Kabbaj, and S. B. Mulay, Universally catenarian domains of $D+M$ type, \emph{Proc. Amer. Math. Soc.} \textbf{104} (2) (1988), 378--384.

\bibitem{AM} D. F. Anderson and S. B. Mulay, Noncatenary factorial domains, \emph{Comm. Algebra} \textbf{17} (5) (1989), 1179--1185.





\bibitem{BDF} A. Bouvier, D.E. Dobbs, and M. Fontana, Universally catenarian integral domains, \emph{Advances in Math.} \textbf{72} (1988), 211--238.

\bibitem{BK}  A. Bouvier and S. Kabbaj, Examples of Jaffard domains, \emph{J. Pure Appl. Algebra} \textbf{54} (2-3) (1988), 155--165.

\bibitem{BCL} J. W. Brewer, D. L. Costa, and E. L. Lady, Prime ideals and localization in commutative group rings, \emph{J. Algebra} \textbf{34} (1975), 300--308.

\bibitem{CA} P.-J. Cahen and A. Ayache, Radical valuatif et sous-extensions, \emph{Comm. Algebra} \textbf{26} (9) (1998), 2767--2787.

\bibitem{Da1}  J. David, A non-Noetherian factorial ring, \emph{Trans. Amer. Math. Soc.} \textbf{169} (1972), 495--502.

\bibitem{Da2}  J. David, A characteristic zero non-Noetherian factorial ring of dimension three, \emph{Trans. Amer. Math. Soc.} \textbf{180} (1973), 315--325.

\bibitem{DFK} D. E. Dobbs, M. Fontana, and S. Kabbaj, Direct limits of Jaffard domains and $S$-domains, \emph{Comment. Math. Univ. St. Paul.} \textbf{39} (2) (1990), 143--155.

\bibitem{EH}  P. Eakin and W. Heinzer, Non finiteness in finite dimensional Krull domains, \emph{J. Algebra} \textbf{14} (1970), 333--340.


\bibitem{FK}  M. Fontana and S. Kabbaj, Essential domains and two conjectures in dimension theory, \emph{Proc. Amer. Math. Soc.} \textbf{132} (2004), 2529--2535.

\bibitem{Fu}  K. Fujita, Three-dimensional unique factorization domain which is not catenary, \emph{J. Algebra} \textbf{49} (2) (1977), 411--414.

\bibitem{G1} R. Gilmer, Multiplicative ideal theory, Pure and Applied Mathematics, No. 12. Marcel Dekker, Inc., New York, 1972.

\bibitem{G2}  R. Gilmer, A two-dimensional non-Noetherian factorial ring, \emph{Proc. Amer. Math. Soc.} \textbf{44} (1974), 25--30.

\bibitem{Gi} J. M. Giral, Krull dimension, transcendence degree and subalgebras of finitely generated algebras, \emph{Arch. Math. (Basel)} \textbf{36} (4) (1981), 305--312.

\bibitem{J} P. Jaffard, Th\'eorie de la dimension dans les anneaux de polyn\^omes, M\'em. Sc. Math. 146, Gauthier-Villars, Paris, 1960.

\bibitem{K1}  S. Kabbaj, La formule de la dimension pour les $S$-domaines forts universels, \emph{Boll. Un. Mat. Ital. D (6)} \textbf{5} (1) (1986), 145--161.

\bibitem{K2} S. Kabbaj, Sur les $S$-domaines forts de Kaplansky, \emph{J. Algebra} \textbf{137} (2) (1991), 400--415.

\bibitem{Ka}  I. Kaplansky, Commutative rings, The University of Chicago Press, Chicago, 1974.

\bibitem{MM}  S. Malik and J. L. Mott, Strong $S$-domains, \emph{J. Pure Appl. Algebra} \textbf{28} (3) (1983), 249--264.

\bibitem{Ma}  H. Matsumura, Commutative ring theory, Second edition, Cambridge Studies in Advanced Mathematics, 8. Cambridge University Press, Cambridge, 1989.

\bibitem{Nj} K. R. Nagarajan, Groups acting on Noetherian rings, \emph{Nieuw Arch. Wisk.} \textbf{16} (3) (1968), 25--29.

\bibitem{Na1} M. Nagata, On the fourteenth problem of Hilbert, in ``Proceedings of the International Congress of Mathematicians, 1958,'' pp. 459--462, Cambridge University Press, London-New York, 1960.

\bibitem{Na2} M. Nagata, Local rings, Robert E. Krieger Publishing Co., Huntington, N.Y., 1975.

\bibitem{OY}    N. Onoda and K. Yoshida, On Noetherian subrings of an affine domain, \emph{Hiroshima Math. J.} \textbf{12} (1982), 377--384.

\bibitem{Re} D. Rees, On a problem of Zariski, \emph{Illinois J. Math.} \textbf{2} (1958), 145--149.

\bibitem{R1} P. Roberts, A prime ideal in a polynomial ring whose symbolic blow-up is not Noetherian, \emph{Proc. Amer. Math. Soc.} \textbf{94} (1985), 589--592.

\bibitem{R2} P. Roberts, An infinitely generated symbolic blow-up in a power series ring and a new counterexample to Hilbert's fourteenth problem, \emph{J. Algebra} \textbf{132} (1990), 461--473




\bibitem{ZS} O. Zariski and P. Samuel, Commutative Algebra Vol. II, Van Nostrand, Princeton, 1960.

\end{thebibliography}
\end{document}